\def\tr{\triangleright}
\def\bar{\overline}
\newtheorem{theorem}{Theorem}
\newtheorem{proposition}[theorem]{Proposition}
\theoremstyle{definition}
\newtheorem{example}{Example}
\newtheorem{definition}{Definition}
\title{\Large \textbf{$\Delta$-Tribrackets and Link Homotopy}}
\date{}
\author{Eric Chavez\footnote{Email: echavez@g.hmc.edu}
\and
Sam Nelson\footnote{Email: Sam.Nelson@cmc.edu. Partially supported by Simons Foundation collaboration grant 702597.}}
\begin{document}
\maketitle

\begin{abstract}
We define a type of Niebrzydowski tribracket we call 
\textit{$\Delta$-tribrackets} and show that their counting invariants are 
invariants of link-homotopy. We further identify several classes of 
tribrackets whose counting invariants for oriented classical knots and links 
are trivial, including vertical tribrackets satisfying the center-involutory 
condition and horizontal tribrackets satisfying the late-commutativity 
condition. We provide examples and end with questions for future research.
\end{abstract}

\parbox{6in} {\textsc{Keywords:} Tribrackets, Link Homotopy, counting 
invariants, ternary algebras

\smallskip

\textsc{2020 MSC:} 57K12}

\section{\large\textbf{Introduction}}\label{I}

A \textit{Niebrzydowski tribracket}, also called a  \textit{knot-theoretic
ternary quasigroup}, is a set with a ternary operation satisfying axioms
encoding the Reidemeister moves in combinatorial knot theory. Specifically,
given a finite tribracket $X$ and an oriented knot or link diagram $D$, the
set of assignments of elements of $X$ to regions in $D$ satisfying a certain
condition at each crossings, known as \textit{$X$-colorings of $D$}, defines 
an invariant of links. See \cite{Nie1} for more.

Associated to any oriented link $L$ is a \textit{fundamental tribracket} 
$\mathcal{T}(L)$ whose
isomorphism class is an invariant of links. From a diagram of an
oriented link we can obtain a presentation of this fundamental tribracket.
The set of $X$-colorings of $D$ can be identified with the set 
$\mathrm{Hom}(\mathcal{T}(L),X)$ of tribracket homomorphisms from the 
fundamental tribracket $\mathcal{T}(L)$ of $L$ to the coloring tribracket 
$X$. Naturally, different diagrams represent the same homomorphism with
different colorings, analogously to how different choices of basis represent
the same linear transformation with different matrices. The cardinality of this
homset is a non-negative integer-valued invariant of oriented knots and links
known as the \textit{tribracket counting invariant}, denoted 
$\Phi_X^{\mathbb{Z}}(L)$.

In this paper we identify a few families of tribrackets whose counting
invariants are trivial on knots, yielding some easily-checkable conditions
on which coloring tribrackets to avoid when using these invariants to
distinguish knots. In particular we define \textit{$\Delta$-tribrackets} whose
counting invariants are trivial on knots and yield link-homotopy invariants
on links with multiple components. The paper is organized
as follows. In Section \ref{TB} we recall the basics of tribracket theory.
In Section \ref{TQ} we identify some families of tribrackets with trivial
invariants arising from known unknotting moves. We define $\Delta$-tribrackets
using the $\Delta$-move, noting that the resulting tribrackets are trivial
for knots and define link-homotopy invariants for links. We identify necessary
and sufficient conditions for Alexander tribrackets to satisfy the conditions
described. In particular the fundamental $\Delta$-tribracket of an oriented 
link has the potential to yield Alexander ideal- or polynomial-style 
invariants of link homotopy type, to be the subject of future investigation.
In Section \ref{E} we provide some example computations and some 
tables of invariant values of the counting invariant for some finite 
$\Delta$-tribrackets on all prime links with up to seven crossings. 
We conclude in Section \ref{Q} with some questions for future research.

\section{\large\textbf{Tribracket Basics}}\label{TB}

We begin with a definition; see \cite{Nie1,NOO} for more.

\begin{definition}
Let $X$ be a set. A ternary map $[,,]:X\times X\times X \to X$ defines
\textit{horizontal tribracket} structure on $X$ if it satisfies the conditions
\begin{itemize}
\item[(i)] For all ordered triples $(x,y,z)\in X\times X\times X$, there exist
unique elements $u,v,w\in X$ such that
\[[x,y,u]=z, \quad [x,v,y]=z\quad \mathrm{and} \quad [w,x,y]=z\]
and
\item[(iih)] For all ordered quadruples $(x,y,z,w)\in X\times X\times X\times X$
we have
\[[y,[x,y,z],[x,y,w]]=[z,[x,y,z],[x,z,w]]=[w,[x,y,w],[x,z,w]].\]
\end{itemize}
A ternary map $\langle,\rangle:X\times X\times X\to X$ defines a 
\textit{vertical tribracket} structure on $X$ if it satisfies the conditions
\begin{itemize}
\item[(i)] For all ordered triples $(x,y,z)\in X\times X\times X$, there exist
unique elements $u,v,w\in X$ such that
\[\langle x,y,u\rangle=z, \quad \langle x,v,y\rangle =z\quad \mathrm{and} 
\quad \langle w,x,y\rangle=z\]
and
\item[(iiv)] For all ordered quadruples $(x,y,z,w)\in X\times X\times X\times$ 
we have
\begin{eqnarray*}
\langle a,\langle x,y,z\rangle,\langle \langle x,y,z\rangle,z,w \rangle \rangle
& = & \langle x,y,\langle y,z,w\rangle \rangle \\
\langle \langle x,y,z\rangle,z,w \rangle & = & 
\langle \langle x,y,\langle y,z,w\rangle \rangle,\langle y,z,w\rangle,w \rangle.
\end{eqnarray*}
\end{itemize}
\end{definition}

Tribracket axiom (i) says that the tribracket operation has left-, center- and 
right-invertibility. 

Tribrackets form a category whose objects are tribrackets and whose morphisms
are \textit{tribracket homomorphisms}, i.e functions $f:X\to Y$ satisfying
for all $x,y,z\in X$
\[f([x,y,z])=[f(x),f(y),f(z)]\]
for vertical tribrackets and
\[f(\langle x,y,z\rangle )=\langle f(x),f(y),f(z)\rangle\]
for horizontal tribrackets.

The two types of tribrackets are equivalent in the sense that for any
vertical tribracket structure on $X$ there is a corresponding horizontal
tribracket structure on $X$ and vice-versa satisfying for all $x,y,z\in X$
\[z=\langle x,y,[x,y,z]\rangle =[x,y,\langle x,y,z\rangle]. \]
Note that we use square brackets to indicate horizontal tribracket operations 
and angle brackets to indicate vertical tribracket operations without further 
comment.

\begin{example}
Let $G$ be a group; then the ternary operation $[x,y,z]=yx^{-1}z$ defines a
horizontal tribracket known as the \textit{Dehn tribracket} of the group.
\end{example}

\begin{example}
Let $M$ be a module over the ring to two-variable Laurent polynomials
with integer coefficients $\mathbb{Z}[t^{\pm 1},s^{\pm 1}]$. Then $M$ is a 
horizontal tribracket with ternary operation $[x,y,z]=ty+sz-tsx$ known
as an \textit{Alexander tribracket}.
\end{example}

\begin{example}
We can specify a tribracket structure on a finite set $X=\{1,\dots, n\}$ 
with an \textit{operation 3-tensor}, i.e. an ordered
$n$-tuple of $n\times n$ matrices with entries in $\{1,2,\dots, n\}$
such that the entry in matrix $i$, row $j$ column $k$ is $[i,j,k]$.
For instance, there are two tribrackets of order 2:
\[\left[\left[\begin{array}{rr} 1 & 2 \\ 2 & 1\end{array}\right],
\left[\begin{array}{rr} 2 & 1 \\ 1 & 2\end{array}\right]\right]\quad
\mathrm{and}\quad
\left[\left[\begin{array}{rr} 2 & 1 \\ 1 & 2\end{array}\right],
\left[\begin{array}{rr} 1 & 2 \\ 2 & 1\end{array}\right]\right].\]
\end{example}

\begin{example}
Let $G=\{x_1,\dots, x_n\}$ be a set of generators. A \textit{tribracket
word} in $G$ is either an element of $G$ or a string of symbols of one of 
the following forms:
\[\{[x,y,z],\ [\bar{x},y,z],\ [x,\bar{y},z], [x,y,\bar{z}]\}\]
where $x,y,z\in G$.
Then a \textit{tribracket presentation} is a finite list $G$ of generators
and \textit{relations}, i.e., equations of tribracket words in $G$. The
presented tribracket is the set of equivalence classes on tribracket words
under the equivalence relation generated by the equivalences
\[\begin{array}{rcl}
x & \sim & [[\bar{x},y,z],y,z] \\
y & \sim & [x,[x,\bar{y},z],z] \\
z & \sim & [x,y,[x,y,\bar{z}]] \\
{}[y,[x,y,z],[x,y,w]] & \sim & [z,[x,y,z],[x,z,w]] \\
{}[z,[x,y,z],[x,z,w]] & \sim & [w,[x,y,w],[x,z,w]]
\end{array}
\]
for all tribracket words $x,y,z,w$ together with the explicitly listed 
relations. 

In particular, the words $[\bar{x},y,z],\ [x,\bar{y},z]$ and 
$[x,y,\bar{z}]$ are the left-, center- and right-inverses required
by the first tribracket axiom; note that $\bar{x}$ by itself is not a
tribracket word.
\end{example}

\begin{definition}
Let $L$ be an oriented link diagram. The \textit{fundamental tribracket}
of $L$, $\mathcal{T}(L)$ has a presentation with a generator for each region 
in the planar 
complement of $L$ and a relation at each crossing of the form
\[\includegraphics{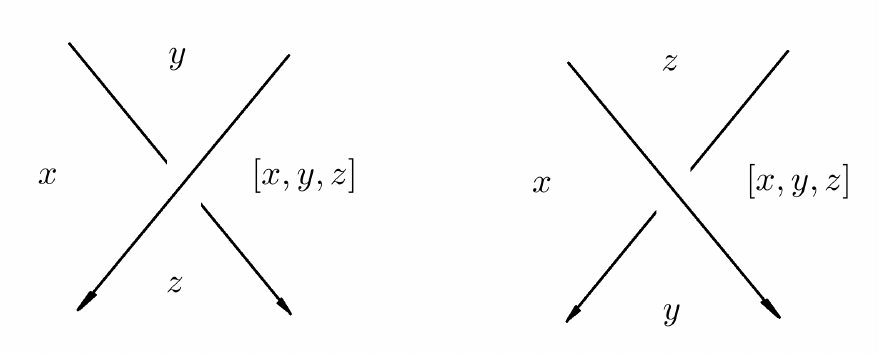}\]
\end{definition}

\begin{example}
The right-handed trefoil knot has fundamental tribracket
with presentation
\[\mathcal{T}(3_1)=\langle x,y,z,w \ |  [x,y,z]=[x,z,w]=[x,w,y]\rangle.\]
\[\includegraphics{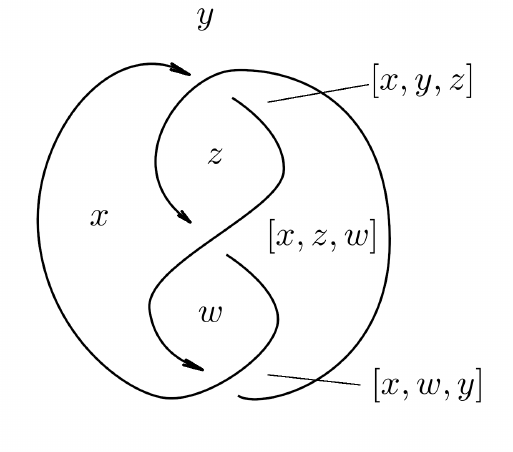}\]
\end{example}

Reidemeister moves on $L$ induce Tietze moves on $\mathcal{T}(L)$; hence it 
follows that $\mathcal{T}(L)$ is an invariant of oriented knots and links.
Since comparing isomorphism classes of objects presented by generators and
relations is generally inconvenient, we can use $\mathcal{T}(L)$ to define 
more computable invariants. One strategy for doing so involves computing
sets of tribracket homomorphisms, i.e. maps $f:\mathcal{T}(L)\to X$
from the fundamental tribracket of $L$ to a finite tribracket $X$
satisfying the condition
\[f([x,y,z])=[f(x),f(y),f(z)]\]
at every crossing. Such homomorphisms can be conveniently represented as
\textit{region colorings} of a diagram of $L$ by elements of $X$; see
\cite{ANR,NNS,NP1,NP2} etc.

\section{\large\textbf{Trivializing Quotients and Trivial Invariants}}\label{TQ}

For other knot-theoretic algebraic structures such as quandles and biquandles,
quotients of the fundamental algebraic structure associated to a knot
or link can provide interesting invariants. For example, the question
of which knots and links have finite \textit{fundamental kei} (also called
\textit{fundamental involutory quandle}), obtained from the fundamental
quandle of the knot or link by including relations $(x\tr y)\tr y=x$ for all
$x,y$, was considered in \cite{W} and later in \cite{HS}. Similarly, involutory 
quotients of knot biquandles were studied in \cite{CN} and quotients of 
fundamental virtual quandles of virtual knots were studied in \cite{NT}.
Any homomorphism from a knot quandle to an involutory target quandle must
factor through the fundamental involutory quandle of the knot. We will now
apply this principle to identify some families of tribrackets whose counting
invariants are trivial.

\begin{definition}
Say that a horizontal tribracket $X$ is \textit{left-}, \textit{center-} or 
\textit{right-involutory} respectively if the functions defined by
\[f_{x,y}(a)=[a,x,y],\quad g_{x,y}(a)=[x,a,y]\quad 
\mathrm{and}\quad h_{x,y}(a)=[x,y,a]\]
respectively are involutions for all $x,y\in X$, i.e., if
\[f_{x,y}(f_{x,y}(a))=g_{x,y}(g_{x,y}(a))=h_{x,y}(h_{x,y}(a))=a.\] Similarly a 
vertical
tribracket is left-, center- or right-involutory if for all $x,y\in X$ the 
functions
\[f_{x,y}(a)=\langle a,x,y\rangle ,\quad g_{x,y}(a)=\langle x,a,y\rangle
\quad \mathrm{and}\quad h_{x,y}(a)=\langle x,y,a\rangle\]
respectively are involutions. Say that a tribracket is \textit{fully involutory}
if it is involutory in all three positions.
\end{definition}

Let $X$ be a center-involutory vertical tribracket. We then note that 
the number of colorings of any oriented knot or link by $X$ is unchanged by
$2$-moves:
\[\includegraphics{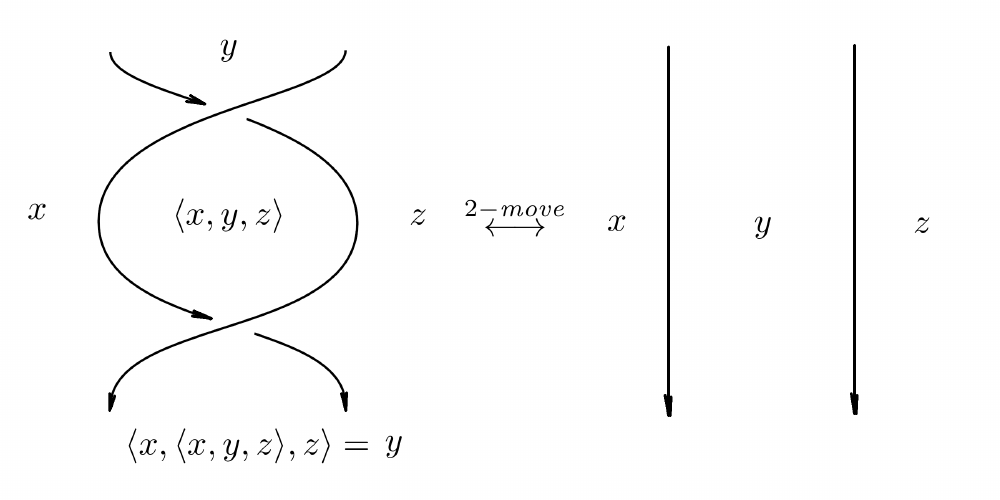}\]
Since the 2-move followed by a Reidemeister II move amounts to a crossing 
change, the fundamental central-involutory vertical tribracket of an
oriented knot or link does not change under crossing change, and 
we have:

\begin{proposition}\label{prop:inv}
The number of colorings of any oriented link diagram by a center-involutory
vertical tribracket is the same as that of the unlink with the same
number $c$ of components, namely $\Phi_X^{\mathbb{Z}}(L)=|X|^{c+1}$.
\end{proposition}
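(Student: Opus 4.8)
The plan is to use the crossing-change invariance established in the discussion immediately preceding the statement and then reduce everything to a direct count on the unlink. That discussion shows that, for a center-involutory vertical tribracket $X$, the number of $X$-colorings of a diagram is unchanged by a $2$-move, and since a $2$-move followed by a Reidemeister II move realizes a crossing change, the $X$-coloring count of any diagram is preserved under crossing changes. Reidemeister invariance of the coloring count is already built into the definition of $\Phi_X^{\mathbb{Z}}$. Hence, for center-involutory $X$, the coloring count is simultaneously invariant under Reidemeister moves and under crossing changes.

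First I would fix a diagram $D$ of $L$ and convert it to a descending diagram: order the $c$ components, choose a basepoint on each, and, traversing the components in order from their basepoints along the given orientations, switch crossings so that the strand reached first at each crossing passes over. Each such switch is a crossing change and leaves the underlying projection---and hence the number of components---unchanged, so by the invariance above the coloring count of $D$ equals that of the resulting descending diagram $D'$. A descending diagram represents the $c$-component unlink $U_c$, so by Reidemeister invariance the coloring count of $D'$ equals $\Phi_X^{\mathbb{Z}}(U_c)$. Therefore $\Phi_X^{\mathbb{Z}}(L)=\Phi_X^{\mathbb{Z}}(U_c)$.

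It remains to evaluate $\Phi_X^{\mathbb{Z}}(U_c)$, which I would do on the standard crossing-free diagram of $c$ disjoint simple closed curves. Its planar complement has exactly $c+1$ regions, namely the $c$ bounded disks and the single unbounded region, and because there are no crossings there are no coloring conditions to impose; every assignment of an element of $X$ to each region is a valid $X$-coloring. Hence $\Phi_X^{\mathbb{Z}}(U_c)=|X|^{c+1}$, and the claim follows. I expect the only genuinely delicate point to be the reduction to the unlink: one must verify that the descending construction uses crossing changes alone, so that the invariance hypothesis applies, and that it truly yields the unlink with exactly $c$ components---this is precisely what tracking the component ordering and the basepoints secures. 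The Reidemeister invariance and the free count on $U_c$ are then routine.
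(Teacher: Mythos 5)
Your proof is correct, and it reaches the conclusion by a route that differs in its machinery from the paper's. The paper works algebraically: it forms the fundamental center-involutory tribracket $\mathcal{T}_{LC}(L)$ by adjoining the relations $\langle x,\langle x,y,z\rangle,z\rangle=y$ to the fundamental tribracket, observes that this quotient object is itself unchanged by crossing changes (hence isomorphic to $\mathcal{T}_{LC}(U_c)$), and then uses a factoring argument --- every homomorphism $\mathcal{T}(L)\to X$ with $X$ center-involutory factors through the quotient --- to identify $|\mathrm{Hom}(\mathcal{T}(L),X)|$ with $|\mathrm{Hom}(\mathcal{T}_{LC}(U_c),X)|=|X|^{c+1}$. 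You instead stay entirely at the level of diagram colorings: crossing-change invariance of the count (from the $2$-move discussion), reduction to the unlink via the descending-diagram construction, and a direct region count on the crossing-free diagram. Your version is more elementary and actually fills in two steps the paper leaves implicit: the explicit verification that crossing changes alone carry any diagram to an unlink diagram with the same number of components (the descending-diagram argument), and the explicit evaluation $\Phi_X^{\mathbb{Z}}(U_c)=|X|^{c+1}$ from the $c+1$ regions of the standard unlink diagram. What the paper's approach buys in exchange is a stronger structural statement: the fundamental center-involutory tribracket is itself a crossing-change invariant of the link, an object of independent interest parallel to the fundamental kei of the quandle literature, from which the counting result falls out as a corollary and which could support further invariants beyond the homset cardinality.
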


\begin{proof}
Let $L$ be an oriented link and consider the fundamental tribracket of
$L$. Adding the relation $\langle x,\langle x,y,z\rangle, z\rangle =y$ 
for all $x,y,z$ yields the fundamental center-involutory tribracket of $L$, 
denoted $\mathcal{T}_{LC}(L)$. The center-involutory condition implies that 
the presented tribracket is unchanged by crossing-change moves.
Hence, the fundamental center-involutory tribracket is invariant
under crossing changes; in particular, it is isomorphic to that 
of the unlink $U_c$ with same number $c$ of components as $L$.

If $X$ is an center-involutory tribracket, then any homomorphism
$f:\mathcal{T}(L)\to X$ must factor through $\mathcal{T}_{LC}(L)$. Since
each homomorphism $f:\mathcal{T}(L)\to \mathcal{T}_{LC}(L)$ is just taking
a quotient, it follows that 
$|\mathrm{Hom}(\mathcal{T}(L),X)|=|\mathrm{Hom}(\mathcal{T}_{LC}(L),X)|$.
Then since   
$|\mathrm{Hom}(\mathcal{T}_{LC}(L),X)|=|\mathrm{Hom}(\mathcal{T}_{LC}(U_c),X)|
=|X|^{c+1}$, we have the result.
\end{proof}

Next we have another category of tribrackets with trivial counting invariants.

\begin{definition}
Let $X$ be a horizontal tribracket. We say $X$ is \textit{late-commutative}
if for all $x,y,z\in X$ we have
\[[x,y,z]=[x,z,y].\]
\end{definition}

\begin{example}
An Alexander tribracket is late-commutative if $t=s$, i.e. 
\[[x,y,z]=ty+tz-t^2x.\]
\end{example}

\begin{example}
A Dehn tribracket $G$ is late-commutative if for all $a,b,c\in G$ we have
\[ab^{-1}c=cb^{-1}a.\]
Then in particular, we must have $ac=ca$ when $b$ is the identity, and 
$G$ is abelian.
\end{example}

We then have
\begin{proposition}
If $X$ is a late-commutative tribracket and $K$ is an oriented link, then
$\Phi_X^{\mathbb{Z}}(K)=|X|^{c+1}$ where $c$ is the number of components of $L$.
\end{proposition}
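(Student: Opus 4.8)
The plan is to imitate the proof of Proposition \ref{prop:inv}, replacing the center-involutory condition and the 2-move with the late-commutative condition and a crossing change. The first step is to read the coloring relation off the crossing figure and record how it transforms under a crossing change. Writing the relation at a positive crossing in the form $[x,y,z]=w$ with the four local regions arranged as in the defining figure, a crossing change should fix the region $x$ and the output $w$ while interchanging the two regions occupying the second and third argument slots, so that the negative crossing instead imposes $[x,z,y]=w$. Thus the identity $[x,y,z]=[x,z,y]$ is exactly the assertion that these two relations coincide, i.e. that the coloring condition at a crossing is insensitive to a crossing change. As a sanity check, for the Alexander tribracket $[x,y,z]=ty+sz-tsx$ the two relations read $ty+sz-tsx=w$ and $tz+sy-tsx=w$, and they agree for all colorings precisely when $t=s$, which is the late-commutative Alexander tribracket $[x,y,z]=ty+tz-t^2x$ recorded above; reassuringly, for generic $t\neq s$ the Alexander tribracket need not be crossing-change invariant, consistent with it carrying nontrivial information.

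With this identification in hand, I would define the \emph{fundamental late-commutative tribracket} $\mathcal{T}_{lc}(L)$ to be the quotient of $\mathcal{T}(L)$ obtained by adjoining the relations $[x,y,z]=[x,z,y]$ for all tribracket words $x,y,z$. By the previous paragraph, the presentation of $\mathcal{T}_{lc}(L)$ read from a diagram is unchanged when a crossing change is performed, so $\mathcal{T}_{lc}(L)$ depends only on the crossing-change equivalence class of $L$. Since any oriented link reduces to the unlink $U_c$ on its own number $c$ of components by a finite sequence of crossing changes (exactly the reduction used in Proposition \ref{prop:inv}), this gives $\mathcal{T}_{lc}(K)\cong\mathcal{T}_{lc}(U_c)$.

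Next I would invoke the universal property of the quotient. Because $X$ is late-commutative, every tribracket homomorphism $f:\mathcal{T}(K)\to X$ automatically respects the relations $[x,y,z]=[x,z,y]$, so it factors uniquely through the quotient map $\mathcal{T}(K)\to\mathcal{T}_{lc}(K)$; this yields a bijection $\mathrm{Hom}(\mathcal{T}(K),X)\cong\mathrm{Hom}(\mathcal{T}_{lc}(K),X)$. Combining with $\mathcal{T}_{lc}(K)\cong\mathcal{T}_{lc}(U_c)$ gives $\Phi_X^{\mathbb{Z}}(K)=|\mathrm{Hom}(\mathcal{T}_{lc}(U_c),X)|$. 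Finally I would evaluate the right-hand side on the crossingless $c$-circle diagram of $U_c$: it has $c+1$ regions and no crossing relations, so a homomorphism is an unconstrained assignment of the $c+1$ region generators to elements of $X$ (the late-commutative relations holding automatically in $X$), giving $|X|^{c+1}$ colorings and hence $\Phi_X^{\mathbb{Z}}(K)=|X|^{c+1}$.

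The step I expect to be the main obstacle is the first one: verifying against the crossing figure that a crossing change acts on the coloring relation precisely by transposing the second and third arguments of the bracket, so that late-commutativity is exactly crossing-change invariance rather than merely a sufficient condition for some weaker move. Once that identification is pinned down, the remaining steps reduce to the same quotient/factoring bookkeeping and unlink count already carried out in Proposition \ref{prop:inv}.
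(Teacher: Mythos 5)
Your proof is correct and takes essentially the same approach as the paper's, which simply invokes the argument of Proposition \ref{prop:inv}: pass to the fundamental late-commutative quotient, observe that it is invariant under crossing changes, reduce to the unlink, and count the $|X|^{c+1}$ colorings of its crossingless diagram. Your explicit identification of late-commutativity with invariance of the crossing relation under transposing the second and third bracket arguments is exactly the detail the paper leaves implicit, and it is consistent with the paper's conventions (e.g., the trefoil relations $[x,y,z]=[x,z,w]=[x,w,y]$ and the two Hopf link relations, which differ precisely by that transposition).
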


\begin{proof}
This is similar to the proof of Proposition \ref{prop:inv}.
\end{proof}

A more subtle category of tribrackets with coloring invariants which are trivial
on knots but provide link-homotopy invariants for links
is what we call \textit{$\Delta$-tribrackets:}
\begin{definition}
Let $X$ be a tribracket. We say $X$ is a \textit{$\Delta$-tribracket}
if for all $x,y,z,w\in X$ we have
\[[y,[x,y,z],[x,w,y]]=[z,[x,z,w],[x,y,z]]=[w,[x,w,y],[x,z,w]].\]
\end{definition}

\begin{example}
An Alexander tribracket is a \textit{$\Delta$-tribracket} if it satisfies
the property $2st=t^2+s^2$, since
\begin{eqnarray*}
{}[y,[x,y,z],[x,w,y]] 
& = & -tsy +t(-tsx+ty+sz)+s(-tsx+tw+sy) \\
& = & (t^2+s^2-ts)y -(t+s)ts x +tsw+tsz, \\
{}[z,[x,z,w],[x,y,z]] & = & (t^2+s^2-ts)z -(t+s)ts x +tsy+tsw\ \mathrm{and} \\
{}[w,[x,w,y],[x,z,w]] & = & (t^2+s^2-ts)w -(t+s)ts x +tsy+tsz 
\end{eqnarray*}
so the condition we need is 
\[t^2+s^2-st=st.\]
Then for example, $\mathbb{Z}_8$ is a $\Delta$-tribracket with $t=3$ and $s=7$,
as is $\mathbb{Z}_9$ with $t=2$ and $s=5$.
\end{example}

The name ``$\Delta$-tribracket'' is chosen to reflect that colorings
by a $\Delta$-tribracket are unchanged $\Delta$-moves:
\[\includegraphics{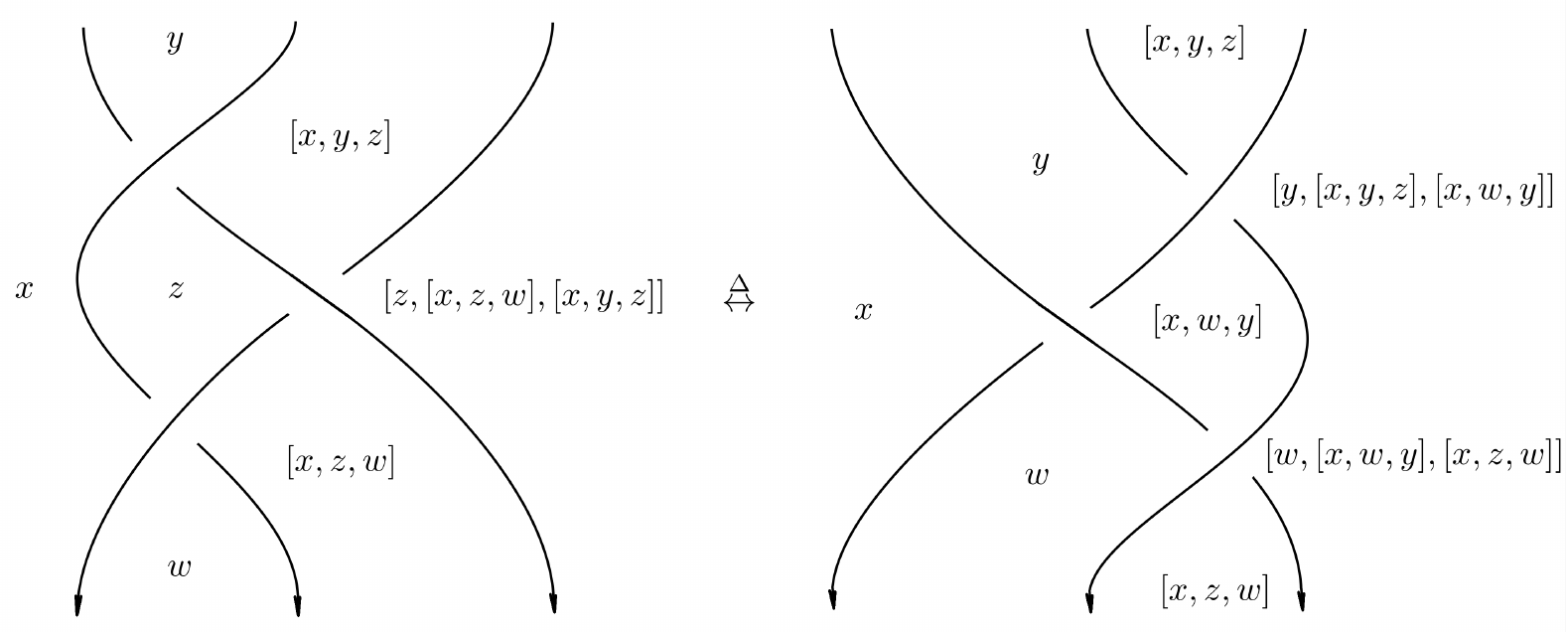}\]

Recall from \cite{MN} that
\begin{itemize}
\item The $\Delta$-move is an unknotting move, and
\item Links of several components are link-homotopic iff they are
related by $\Delta$-moves together with Reidemeister moves.
\end{itemize} 

As a consequence we have:
\begin{proposition}
The counting invariant $\phi_X^{\mathbb{Z}}$ associated to a $\Delta$-tribracket
is a link-homotopy invariant.
\end{proposition}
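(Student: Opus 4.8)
The plan is to mirror the logic already used for Proposition~\ref{prop:inv}, replacing the crossing-change invariance coming from the center-involutory condition with $\Delta$-move invariance coming from the $\Delta$-tribracket condition. The two facts quoted from \cite{MN} are precisely what makes this work: since links of several components are link-homotopic if and only if they are related by $\Delta$-moves together with Reidemeister moves, it suffices to show that the counting invariant $\phi_X^{\mathbb{Z}}$ is unchanged under a single $\Delta$-move (it is already a Reidemeister invariant by the general theory of tribracket colorings recalled in Section~\ref{TB}).

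First I would make precise the claim, asserted in the excerpt via the figure \texttt{ec-sn-3.pdf}, that $X$-colorings are preserved by a $\Delta$-move when $X$ is a $\Delta$-tribracket. The $\Delta$-move replaces a local tangle of three strands with another; the regions outside a disk containing the move are untouched, so one must check that colorings of the regions inside the disk biject before and after the move, agreeing on the boundary. Labeling the regions around the triangle by $x,y,z,w$ as in the defining triple equation, the coloring constraints on the two sides of the move reduce exactly to the three-way equality
\[[y,[x,y,z],[x,w,y]]=[z,[x,z,w],[x,y,z]]=[w,[x,w,y],[x,z,w]],\]
which is the $\Delta$-tribracket axiom. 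This gives a bijection between colorings of the two diagrams, hence equality of the counting invariants. I expect this local verification to be the main obstacle, since it requires carefully reading the colors off the figure and confirming that the crossing relations on each side collapse to precisely the stated identity with no residual constraints.

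Given this local lemma, the argument concludes cleanly. For any two link-homotopic diagrams $D$ and $D'$, the quoted result produces a finite sequence of $\Delta$-moves and Reidemeister moves carrying $D$ to $D'$. Each Reidemeister move preserves $|\mathrm{Hom}(\mathcal{T}(L),X)|$ because tribracket axioms (i) and (iih)/(iiv) encode exactly those moves, and each $\Delta$-move preserves it by the local lemma. Composing these bijections yields $\phi_X^{\mathbb{Z}}(D)=\phi_X^{\mathbb{Z}}(D')$, so the invariant depends only on the link-homotopy class. Equivalently, one may phrase this at the level of fundamental objects: adding the $\Delta$-relations to $\mathcal{T}(L)$ yields a \emph{fundamental $\Delta$-tribracket} whose isomorphism type is a link-homotopy invariant, and any homomorphism to a $\Delta$-tribracket $X$ factors through it, so the homset cardinality is a link-homotopy invariant.
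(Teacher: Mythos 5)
Your proposal is correct and follows essentially the same route as the paper: the paper's (very terse) justification is exactly the observation that $X$-colorings are preserved under $\Delta$-moves when $X$ satisfies the $\Delta$-tribracket identity, combined with the cited facts from \cite{MN} that link-homotopy is generated by $\Delta$-moves together with Reidemeister moves. Your write-up simply makes explicit the local coloring verification and the factoring through a fundamental $\Delta$-tribracket that the paper leaves implicit.
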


\section{\large\textbf{Examples}}\label{E}

In this section we collect a few examples.

\begin{example}\label{ex:trefoil1}
Consider the trefoil knot $3_1$ below.
\[\includegraphics{ec-sn-2.pdf}\]
As we have seen, its fundamental tribracket has presentation
\[\mathcal{T}(3_1)=\langle x,y,z,w\ | [x,y,z]=[x,z,w]=[x,w,y] \rangle.\]
As an Alexander tribracket, this has presentation matrix
\[
\left[\begin{array}{rrrrr}
1 & st & -t & -s & 0  \\
1 & st & 0 & -t & -s  \\
1 & st & -s & 0 & -t  \\
\end{array}\right].\]
In the case of the fundamental late-commutative tribracket this is
\[\left[\begin{array}{rrrrr}
1 & t^2 & -t & -t & 0  \\
1 & t^2 & 0 & -t & -t  \\
1 & t^2 & -t & 0 & -t  
\end{array}\right]\]
which row-reduces over $\mathbb{Z}[t^{\pm 1}]$ to
\[\left[\begin{array}{rrrrr}
t^{-1} & t & 0 & 0 & -2  \\
0 & 0   & 1 & 0 & -1  \\
0 & 0   & 0 & 1 & -1  \\
\end{array}\right]\]
with kernel of dimension 2 (the same as the unknot) as expected.
\end{example}

\begin{example} \label{ex:trefoil2}
Continuing with the trefoil from Example \ref{ex:trefoil1}, first
row-reducing the presentation matrix over $\mathbb{Z}[t^{\pm 1},s^{\pm 1}]$
we have
\[
\left[\begin{array}{rrrrr}
1 & st & -t & -s & 0  \\
1 & st & 0 & -t & -s  \\
1 & st & -s & 0 & -t  \\
\end{array}\right]
\rightarrow
\left[\begin{array}{rrrrr}
1 & st & -t & -s & 0  \\
0 & 0 & t & s-t & -s  \\
0 & 0 & t-s & s & -t  \\
\end{array}\right]\]\[
\rightarrow
\left[\begin{array}{rrrrr}
1 & st & -t & -s & 0  \\
0 & 0 & t & s-t & -s  \\
0 & 0 & -s & t & s-t  \\
\end{array}\right]
\rightarrow
\left[\begin{array}{rrrrr}
1 & st & -t & -s & 0  \\
0 & 0 & st & s^2-ts & -s^2  \\
0 & 0 & -st & t^2 & st-t^2  \\
\end{array}\right]\]\[
\rightarrow
\left[\begin{array}{ccccc}
1 & st & -t & -s & 0  \\
0 & 0 & st & s^2-st & -s^2  \\
0 & 0 & 0 & s^2+t^2-st & st-s^2-t^2  \\
\end{array}\right]
\]
So in the $\Delta$-tribracket case, with $t^2+s^2-st=st$, we have
\[
\left[\begin{array}{rrrrr}
1 & st & -t & -s & 0  \\
0 & 0 & st & s^2-st & -s^2  \\
0 & 0 & 0 & st & -st  \\
\end{array}\right]
\rightarrow 
\left[\begin{array}{rrrrr}
1 & st & -t & -s & 0  \\
0 & 0 & st & s^2-st & -s^2  \\
0 & 0 & 0 & 1 & -1  \\
\end{array}\right]
\rightarrow
\left[\begin{array}{rrrrr}
1 & st & -t & -s & 0  \\
0 & 0 & 1 & st^{-1}-1 & -st^{-1}  \\
0 & 0 & 0 & 1 & -1  \\
\end{array}\right]
\]
\[
\rightarrow
\left[\begin{array}{rrrrr}
1 & st & -t & -s & 0  \\
0 & 0 & 1 & 0 & -2st^{-1}+1  \\
0 & 0 & 0 & 1 & -1  \\
\end{array}\right]
\rightarrow
\left[\begin{array}{rrrrr}
1 & st & 0 & 0 & -s+t  \\
0 & 0 & 1 & 0 & -2st^{-1}+1  \\
0 & 0 & 0 & 1 & -1  \\
\end{array}\right]
\]
%\left[\begin{array}{rrrrr}
%1 & st & -t & -s & 0  \\
%0 & 0 & 1 & st^{-1}-1 & -st^{-1}  \\
%0 & 0 & 0 & t^2+s^2-st  & st-t^2 -s^2  \\
%\end{array}\right]
%\]
%so in the case of the $\Delta$-tribracket we have
%\[
%\left[\begin{array}{rrrrr}
%1 & st & -t & -s & 0  \\
%0 & 0 & 1 & st^{-1}-1 & -st^{-1}  \\
%0 & 0 & 0 & st  & -st  \\
%\end{array}\right]
%\]
%which row-reduces to
%\[
%\left[\begin{array}{rrrrr}
%1 & st & 0 & 0 & -s-t  \\
%0 & 0  & 1  & 0 & -1  \\
%0 & 0  & 0  & 1  & -1  \\
%\end{array}\right]
%\]
again with kernel of dimension 2 as expected.
\end{example}

\begin{example}
Taking the case of the Hopf link $L2a1$ below,
\[\includegraphics{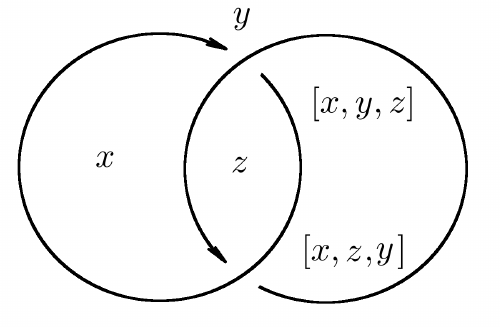}\]
we have fundamental Alexander tribracket matrix presented by
\[
\left[\begin{array}{rrrr}
1 & st & -t & -s \\
1 & st & -s & -t \\
\end{array}\right]
\]
which row-reduces over $\mathbb{Z}$ to
\[
\left[\begin{array}{rrrr}
1 & st & -t & -s \\
0 & 0 & t-s & s-t \\
\end{array}\right].
\]
In the case of the late-commutative tribracket we get kernel of dimension
3 as anticipated, the same as the unlink of two components. More generally, 
for coloring tribrackets in which $t-s$ is a unit, we will have kernel of 
dimension 2, distinguishing the link-homotopy type of the Hopf link from 
that of the unlink. For coloring tribrackets in which $t-s$ is a zero 
divisor, the torsion part of the coloring space can also distinguish the 
link-homotopy type of this link.
\end{example}

\begin{example}\label{ex:delta}
Using \texttt{python} code, we computed the numbers of colorings for
the prime links with up to seven crossings as listed at the Knot Atlas 
(\texttt{www.katlas.org}) with respect the two Alexander $\Delta$-tribrackets 
$X_1=(\mathbb{Z}_8,s=7,t=3)$ and $X_1=(\mathbb{Z}_9,s=5,t=2)$; the results
are collected in the table.
\[
\begin{array}{r|l}
L & \Phi_{X_1}^{\mathbb{Z}}(L) \\ \hline
256 & L2a1, L6a2, L6a3, L7a5, L7a6 \\
512 & L4a1, L5a1, L6a1, L7a1, L7a2, L7a3, L7a4,L7n1, L7n2\\
1024 & L6a1, L6n1, L7a7 \\
4096 & L6a4
\end{array}
\]\[
\begin{array}{r|l}
L & \Phi_{X_2}^{\mathbb{Z}}(L) \\ \hline
243 & L2a1, L4a1, L6a1, L7a2, L7a5, L7a6, L7n1 \\
729 & L5a1, L6a2, L6a3, L7a1, L7a3, L7a4, L7a7, L7n2\\
2187 & L6a5, L6n1 \\
6561 & L6a4
\end{array}
\]
\end{example}

These $\Delta$-tribracket invariants are fairly quick and easily computable, 
providing a useful tool for distinguishing link-homotopy classes. Example 
\ref{ex:delta} shows that non-isomorphic finite $\Delta$-tribrackets provide
different information about link-homotopy type in general. As with all such
invariants, the invariant defined by any given finite $\Delta$-tribracket
is not expected to be a complete invariant of link-homotopy. However, the 
set of all $\Delta$-tribracket invariants could potentially be a complete
invariant.

\section{\large\textbf{Questions}}\label{Q}

We end with a few questions for future research.

What is the exact relationship between the fundamental $\Delta$-tribracket
of a link, its homset invariants, and other link-homotopy invariants such as 
Milnor invariants or the quasi-trivial quandles and biquandle invariants 
in \cite{ELN,I}?

The fundamental Alexander $\Delta$-tribracket of a link seems to potentially
play the role of an Alexander invariant for link-homotopy; what ideal-based
or polynomial invariants can be extracted from it?

What enhancements of the fundamental $\Delta$-tribracket of a link can be 
defined, and what information about link-homotopy type do they extract?

Is the set of all $\Delta$-tribracket counting invariants a complete invariant
of link homomotpy? That is, given any two non-link-homotopic links $L$ and $L'$,
is there always a finite $\Delta$-tribracket $X$ such that
\[\Phi_X^{\mathbb{Z}}(L)\ne\Phi_X^{\mathbb{Z}}(L')?\]
If so, how can we find the smallest such $X$? If not, identify under what
conditions $L$ and $L'$ are not link-homotopic but have isomorphic 
fundamental $\Delta$-tribrackets.

\bibliography{ec-sn}{}
\bibliographystyle{abbrv}

\bigskip

\noindent
\textsc{Department of Mathematical Sciences \\
Claremont McKenna College \\
850 Columbia Ave. \\
Claremont, CA 91711}

\end{document}